\newcommand{\R}{\mathbb{R}}
\newcommand{\D}{\mathbb{D}}
\newcommand{\C}{\mathbb{C}}
\renewcommand{\H}{\mathcal{H}}
\newcommand{\N}{\mathbb{N}}
\theoremstyle{plain}
\newtheorem{thm}{Theorem}[section]
\newtheorem*{thm*}{Theorem}
\newtheorem{lem}[thm]{Lemma}
\newtheorem{prop}[thm]{Proposition}
\newtheorem{cor}[thm]{Corollary}
\theoremstyle{definition}
\newtheorem{rmk}[thm]{Remark}
\numberwithin{equation}{section}
\newcommand{\abs}[1]{\left|#1\right|}
\def\a{\alpha}        \def\b{\beta}      \def\th{\theta}       
\def\vp{\varphi}      \def\z{\zeta}
\def\zth{\z_{\th}}  
\def\ra{\rightarrow}      \def \ov{\overline}
\def\beq{\begin{equation}}  \def\eeq{\end{equation}}
\def\beqq{\begin{equation*}}  \def\eeqq{\end{equation*}}
\def\bprof{\begin{proof}}    \def\eprof{\end{proof}}
\def\bad{\begin{aligned}}    \def\ead{\end{aligned}}
\def\l{\left}      \def\r{\right}
\def\bthm{\begin{thm}} \def\ethm{\end{thm}}
\def\blem{\begin{lem}} \def\elem{\end{lem}}
\def\p{\partial} \def\e{e^{i \th}}
\def\be{\begin{enumerate}} \def\ee{\end{enumerate}}
\begin{document}

\title[variability regions for the third derivative]{Variability regions for the third derivative of bounded analytic functions}

\author{Gangqiang Chen}
\address{Graduate School of Information Sciences,
Tohoku University,
Aoba-ku, Sendai 980-8579, Japan}
\email{cgqmath@ims.is.tohoku.ac.jp; cgqmath@qq.com}
\subjclass[2010]{Primary 30C80; secondary 30F45}
\keywords{Bounded analytic functions, Schwarz's Lemma, Dieudonn\'e's Lemma, variability region}

\begin{abstract}
Let $z_0$ and $w_0$ be given points in the open unit disk $\mathbb{D}$
with $|w_0| < |z_0|$, and  $\H_0$ be the class of all analytic self-maps $f$
of $\mathbb{D}$ normalized by $f(0)=0$.
In this paper, we establish the third order Dieudonn\'e Lemma, and apply it to explicitly determine the variability region
$\{f'''(z_0): f\in \H_0,f(z_0) =w_0, f'(z_0)=w_1\}$ for given $z_0,w_0,w_1$ and give the form of all the extremal functions.
\end{abstract}

\maketitle

\section{Introduction}
We denote by $\mathbb{D}=\{z\in \C: |z|<1\}$ the open unit disk
in the complex plane $\C$ and by $\H_0$ the set of all analytic self-maps $f$ of $\mathbb{D}$ normalized by $f(0)=0$. In 1890, Schwarz proved that $|f(z_0)|\le |z_0|$ and $|f'(0)|\le 1$ hold for all $f\in \H_0$ and $z_0\in \D$, which gives sharp estimates of the values of $f(z_0)$ and $f'(0)$.
 Since the discovery of the celebrated Schwarz Lemma, a lot of famous mathematicians have devoted themselves to the extensions and generalizations of Schwarz's Lemma.

It is worth mentioning the refinements of Schwarz's Lemma, before that we fix some notation. For $c\in\C$ and $\rho>0$, we define the discs $\D(c, \rho)$ and $\overline{\D}(c, \rho)$ by
$\D(c, \rho):=\left\{ \zeta \in \C : |\zeta-c|< \rho \right\}$,
and
$\overline{\D}(c, \rho):=\left\{\zeta \in \C : |\zeta-c|\le \rho \right\}$.
Let $z_0,w_0\in \D$ be given points
with $|w_0|<|z_0|$. Then Schwarz's Lemma can be restated as $\{f(z_0):f \in \mathcal{H}_0\}=\ov{\D}(0,\ |z_0|)$ for any  $z_0\in \mathbb{D}\setminus \{0\}$, and $f(z_0)\in \partial \D(0,\ |z_0|)$ if and only if
 $ f$ is a rotation about the origin.
 In 1934, Rogosinski \cite{rogosinski1934} established an assertion which can be considered as a
sharpened version of Schwarz's Lemma. His result describes the variability region of $f(z)$ for $z\in\D$, $f\in \H_0$ with $|f'(0)|<1$, proved by calculating the envelop of a certain union of disks (for the details of the proof, see \cite{duren1983univalent} and \cite{goluzin1969geometric}). In 1996, Mercer \cite{mercer1997sharpened} determined the variability region of $f(z)$ for $z\in\D$, $f\in \H_0$ with $f(z_0)=w_0 (z_0\ne 0)$, which can be reduced to Rogosinski's Lemma as $z_0\ra 0$.

In 1931, Dieudonn\'e \cite{dieudonne1931} first obtained a sharp inequality for the derivative $f'(z_0)$ of $f\in \H_0$,
\begin{equation}\label{ineq:f'}
\abs{f'(z_0)-\frac{w_0}{z_0}}\le \frac{|z_0|^2 -|w_0|^2}{|z_0|(1-|w_0|^2)},
\end{equation}
which is an improvement for the derivative part of Schwarz's Lemma.
Equality in \eqref{ineq:f'} holds if and only if $f$ is a Blaschke product of degree 2 fixing 0. Here we remark that a Blaschke product of degree $n \in \N$  takes the form
         $$ B(z)=e^{i \theta}\prod\limits_{j=1}^{n}
         \frac{z-z_j}{1-\overline{z_j}z}, \quad z, z_j\in \D, \theta \in \mathbb{R}.$$
Moreover, his result, which is nowadays known as Dieudonn\'e's Lemma, coincides with the description of  the variability region of $f'(z_0)$, $f\in \mathcal{H}_0$, at a fixed point $z_0\in \mathbb{D}$. In other words,
if we define the M\"obius transformation
$$T_{a}(z)=\frac{z+a}{1+\overline{a}z},\quad z, a\in\D,$$
 and write
 \[
  \Delta (z_0,w_0)
  =
  \overline{\mathbb{D}}
  \left( \frac{w_0}{z_0} , \frac{|z_0|^2-|w_0|^2}{|z_0|(1-|w_0|^2)} \right),
\]
then his observation can be restated as $\{f'(z_0):f \in \mathcal{H}_0, f(z_0)=w_0\}=\Delta(z_0,w_0)$, and
$f'(z_0) \in \partial \Delta (z_0,w_0) $ if and only if
$f(z)=z\;T_{u_0}(e^{i \theta}T_{-z_0}(z))$, where $u_0=w_0/z_0$ and $\theta \in \R$  (see also \cite{beardon2004multi}, \cite{chen_2019} and \cite{rivard2013application}) .

In 2013, Rivard \cite{rivard2013application} proved the so-called second order Dieudonn\'e Lemma which tells us that
if $f\in\mathcal{H}_0$  is not an automorphism of $\mathbb{D}$, then
   \begin{align}\label{ineq:f''}
       &\left|\frac{1}{2}z_0^2 f''(z_0)-\frac{z_0 w_1-w_0}{1-|z_0|^2}
         +\frac{\overline{w_0}(z_0 w_1-w_0)^2}{|z_0|^2-|w_0|^2}\right| +\frac{|z_0||z_0 w_1-w_0|^2}{|z_0|^2-|w_0|^2}\nonumber\\
       &\qquad \le \frac{|z_0| (|z_0|^2-|w_0|^2)}{(1-|z_0|^2)^2},
   \end{align}
where $f(z_0)=w_0$ and $f'(z_0)=w_1\in \Delta(z_0,w_0)$. Equality in \eqref{ineq:f''} holds if and only if $f(z)=zg(z)$ where $g(z)$ is a Blaschke product of degree 1 or 2 (see also \cite{cho2012multi}).
The original version can be appropriately modified as follows. Let $|z_0|=r$, $|w_0|=s$ and $\beta \in \ov{\D}$. Then
\begin{align*}
V(z_0,w_0,\beta)&=\{f''(z_0):f\in\H_0,f(z_0)=w_0,
f'(z_0)=\frac{w_0}{z_0}+\frac{r^2-s^2}{z_0(1-r^2)}\b\}\\
&=\frac{2(r^2-s^2)}{r^2(1-r^2)^2}\ov{\D}(c(\b),\rho(\b)),
\end{align*}
where
$$
c(\b)=\frac{\ov{z}_0}{z_0}\beta(1-\overline{w_0}\beta), \quad \rho(\b)=r(1-|\beta|^2),
$$
and for $\beta\in \D$, $f\in \p V(z_0,w_0,\beta)$ if and only if $f(z)=z T_{u_0}\l(T_{-z_0}(z) T_{v_0}(e^{i \theta}T_{-z_0}(z))\r)$, where $\theta \in \R$, $u_0=w_0/z_0$ and $v_0=\overline{z}_0^2 \beta/r^2$.
By using this result, the author \cite{chen_2019} obtained the sharp upper bound for $|f''(z_0)|$ depending only on $|z_0|$. In addition, the author and Yanagihara \cite{chen2020} also maked use of this consequence to precisely determine the variability region
$V(z_0,w_0)=\{f''(z_0):f\in\H_0, f(z_0)=w_0\}$.

 It is natural for us to further study the third order derivative $f'''$ of $f\in \H_0$. In fact, the purpose of this present paper is to establish a third order Dieudonn\'e Lemma and then apply it to a variability region problem. Before the statement of our main result,
we denote $c$ and $\rho$ by
\begin{equation*}
\left\{
\begin{aligned}
c&=c(z_0,w_0,w_1,w_2)
=\frac{6(r^2-s^2)}{z_0^3(1-r^2)^3}\left(\mathcal{B}+z_0\mu(1-|\lambda|^2)(1+r^2-2 \ov{w_0}\lambda-z_0 \ov{\lambda}\mu)\right);\\
\rho&=\rho(z_0,w_0,w_1,w_2)=\frac{6(r^2-s^2)}{r(1-r^2)^3}(1-|\lambda|^2)(1-|\mu|^2),
\end{aligned}
\right.
\end{equation*}
where
$$\mathcal{B}=\ov{w_0}^2 \lambda^3-\ov{w_0}(1+r^2)\lambda^2+r^2\lambda.$$
\begin{thm}[The third order Dieudonn\'e Lemma]\label{thm:lemthird}
Let $z_0, w_0\in \mathbb{D}$, $\lambda, \mu \in \ov{\D}$ with $|w_0|=s<r=|z_0|$, $w_1= \dfrac{w_0}{z_0}+\dfrac{r^2-s^2}{z_0(1-r^2)}\lambda$, $$w_2=\dfrac{2(r^2-s^2)\lambda(1-\overline{w_0}\lambda)}{z_0^2(1-r^2)^2}+\dfrac{2(r^2-s^2)(1-|\lambda|^2)}{z_0(1-r^2)^2}\mu.$$ Suppose that $f\in\mathcal{H}_0$, $f(z_0) = w_0$, $f'(z_0)=w_1$ and $f''(z_0)=w_2$.
Set $u_0=w_0/z_0$, $v_0=r^2\lambda/z_0^2$.
\begin{enumerate}
\item If $|\lambda|=1$, then $f'''(z_0)=c$ and $f(z)=z T_{u_0}(v_0 T_{-z_0}(z))$.

\item If $|\lambda|<1$, $|\mu|=1$, then $f'''(z_0)=c$ and $f(z)=z T_{u_0}\l(T_{-z_0}(z) T_{v_0}(\tau T_{-z_0}(z))\r)$, where $\tau=\ov{z_0}\mu/z_0$.

\item If $|\lambda|<1$, $|\mu|<1$, then the region of values of $f'''(z_0)$ is the closed disk
$\overline{\D}(c, \rho)$.
Furthermore, $f'''(z_0)\in \p\D(c, \rho)$ if and only if\\
$f(z)=z T_{u_0}\l(T_{-z_0}(z) T_{v_0}(T_{-z_0}(z) T_{\eta}(\e T_{-z_0}(z)))\r)$, where $\theta \in \R$ and
$$\eta=\frac{r^2\mu}{z_0^2}+\frac{\lambda^2r^2(r^2w_0-z_0^2\ov{w_0})}{z_0^5(1-|\lambda|^2)}.$$
\end{enumerate}
\end{thm}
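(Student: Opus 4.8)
The plan is to run the iterative Schwarz--Pick reduction (the Schur algorithm based at $z_0$), stripping off one Blaschke factor $T_{-z_0}$ at each stage exactly as in the proofs of the first- and second-order Dieudonn\'e Lemmas, but carried through three stages. Since $f\in\H_0$ vanishes at the origin, Schwarz's Lemma gives $f(z)=z\,f_1(z)$ with $f_1\colon\D\to\ov{\D}$ and $f_1(z_0)=u_0=w_0/z_0$, whose modulus $s/r$ is strictly less than $1$. Because $|u_0|<1$, the map $T_{-u_0}\circ f_1$ sends $\D$ into $\ov{\D}$ and vanishes at $z_0$, so the maximum principle furnishes $f_2\colon\D\to\ov{\D}$ with $T_{-u_0}(f_1(z))=T_{-z_0}(z)\,f_2(z)$, i.e. $f_1(z)=T_{u_0}(T_{-z_0}(z)\,f_2(z))$. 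Applying the identical step to $f_2$ and then to $f_3$ yields
\[
f(z)=z\,T_{u_0}\!\Big(T_{-z_0}(z)\,T_{v_0}\big(T_{-z_0}(z)\,T_{\eta}\big(T_{-z_0}(z)\,f_4(z)\big)\big)\Big),
\]
where $v_0=f_2(z_0)$, $\eta=f_3(z_0)$, and $f_4\colon\D\to\ov{\D}$ is otherwise free.

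I would then read off the parameters by differentiating this representation and evaluating at $z_0$, using $T_{-z_0}(z_0)=0$, $T_{-z_0}'(z_0)=1/(1-r^2)$ and $T_{-z_0}''(z_0)=2\ov{z_0}/(1-r^2)^2$. Because $T_{-z_0}$ vanishes at $z_0$, each successive derivative of $f$ introduces exactly one new unknown: $f(z_0)=w_0$ gives $u_0=w_0/z_0$; $f'(z_0)=w_1$ then determines $v_0=f_2(z_0)$, and the prescribed form of $w_1$ makes this $v_0=r^2\lambda/z_0^2$ (so $|v_0|=|\lambda|$); and $f''(z_0)=w_2$ determines $f_2'(z_0)$ and hence the hyperbolic derivative $\eta=f_3(z_0)=(1-r^2)f_2'(z_0)/(1-|v_0|^2)$, which after substitution of the prescribed $w_2$ takes the stated value. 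A useful cross-check is that this last step is precisely the second-order Dieudonn\'e Lemma recalled in the Introduction: with $u_0,v_0$ fixed, $\mu$ is the normalized coordinate of $w_2$ inside the disk $V(z_0,w_0,\lambda)$, whence $|\mu|\le 1$, with $|\mu|=1$ characterizing its boundary.

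With $u_0,v_0,\eta$ determined, only $f_4(z_0)$ remains free. Since $f_4$ enters solely through $f_3(z)=T_{\eta}(T_{-z_0}(z)f_4(z))$ and $T_{-z_0}(z_0)=0$, one checks that $f_3(z_0)=\eta$ is independent of $f_4(z_0)$ while $f_3'(z_0)=(1-|\eta|^2)f_4(z_0)/(1-r^2)$ is linear in it; consequently a third differentiation at $z_0$ expresses $f'''(z_0)=c+\kappa\,f_4(z_0)$ as an affine function of this single value, with $|\kappa|=\rho$. By the Schwarz--Pick Lemma $f_4(z_0)$ ranges over the whole of $\ov{\D}$ as $f$ ranges over the admissible class, so the image is exactly $\ov{\D}(c,\rho)$, proving part (3); and $f'''(z_0)\in\partial\D(c,\rho)$ forces $|f_4(z_0)|=1$, hence $f_4\equiv\e$ by the maximum modulus principle, which upon substitution reproduces the extremal function displayed in (3).

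The degenerate parts then correspond to premature termination of the iteration. If $|\lambda|=1$ then $|v_0|=1$, so $f_2$ is the unimodular constant $v_0$, the chain stops after the first stage, and $f$ is rigid with the single value $f'''(z_0)=c$ of part (1). If $|\lambda|<1$ but $|\mu|=1$, then $w_2$ lies on the boundary of $V(z_0,w_0,\lambda)$, so by the equality case of the second-order Lemma $f_3$ is a unimodular constant, the chain stops after the second stage, and again $f$ is rigid with $f'''(z_0)=c$, which is part (2). The genuinely laborious point, and the only real obstacle, is the explicit third-order (Fa\`a di Bruno) chain-rule computation through the triple composition of M\"obius maps and the ensuing algebraic simplification that turns the raw expression for $f'''(z_0)$ into the closed forms of the center $c$ (with its abbreviation $\mathcal{B}$) and the radius $\rho$; bookkeeping of the successive derivatives of $T_{-z_0}$ at $z_0$ and of the nested parameters $u_0,v_0,\eta$ is where essentially all the work lies.
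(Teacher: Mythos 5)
Your proposal is correct in structure, and it takes a genuinely different route from the paper on the central point. The paper obtains the containment $f'''(z_0)\in\overline{\D}(c,\rho)$ by passing to $g=f/z$ and invoking the Cho--Kim--Sugawa multi-point Schwarz--Pick inequality (Lemma~\ref{lem:cho}) in terms of Peschl's invariant derivatives $D_1g,D_2g,D_3g$; it then proves coverage separately, by writing down the explicit three-fold composition $f(z)=zT_{u_0}(T_{-r}(z)T_{\lambda}(T_{-r}(z)T_{\mu}(\alpha T_{-r}(z))))$ and computing $f'''(r)=c_0+\rho_0\alpha$ by the chain rule; finally it extracts the extremal functions from the equality case of Lemma~\ref{lem:cho} (Blaschke products of degree at most $3$) via a Schur-type unwinding ($h$, $H$, $G$). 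You instead run the Schur algorithm forward on an \emph{arbitrary} admissible $f$, obtaining the universal representation with a free tail $f_4\colon\D\to\overline{\D}$, so that the containment, the coverage, and the characterization of equality all follow at once from the single observation that $f'''(z_0)=c+\kappa f_4(z_0)$ is affine in $f_4(z_0)\in\overline{\D}$; the degenerate cases (1) and (2) come out uniformly as premature termination of the chain. This buys self-containedness (no appeal to Lemma~\ref{lem:cho}) and a one-shot proof of both inclusions, at the cost of having to verify directly that the affine coefficients are exactly $c$ and a $\kappa$ with $|\kappa|=\rho$ --- but that computation is the same chain-rule bookkeeping the paper already performs in its coverage step, so nothing extra is lost. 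Two small points you should make explicit: first, that $|\eta|=|f_3(z_0)|<1$ in case (3) (you need this to run the third stage; it follows, as you hint, from the equality case of the second-order lemma, since $|f_3(z_0)|=1$ would force $f''(z_0)\in\partial V(z_0,w_0,\lambda)$, i.e.\ $|\mu|=1$); second, your method produces its own closed formula for $\eta=f_3(z_0)$ from the rotation reduction, and you should actually carry out that substitution rather than assert it matches the displayed expression --- this is exactly the step where the normalizing rotation factors $e^{i\varphi}, e^{i\xi}$ must be tracked carefully, and it is worth cross-checking the result against the parameter $\tau=\ov{z_0}\mu/z_0$ appearing in case (2), to which $\eta$ should degenerate continuously as $|\mu|\to 1$.
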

In Sect. 4, we will make use of the third order Dieudonn\'e Lemma to determine the region of values of $f'''(z_0)$, $f\in\H_0$, in terms of $z_0, f(z_0),f'(z_0)$. More precisely, we shall explicitly describe the variability region $\{f'''(z_0):f\in\H_0,f(z_0)=w_0,f'(z_0)=w_1\}$ for given points $z_0,w_0,w_1$,  and give the form of all the extremal functions. For this purpose, we restate Case (3) in Theorem \ref{thm:lemthird} as follows. Under the same hypotheses as in Theorem \ref{thm:lemthird} except that $\lambda\in \D$, then
$$
V(z_0,w_0,\lambda,\mu)=\{f'''(z_0):f\in \H_0, f(z_0)=w_0, f'(z_0)=w_1, f''(z_0)=w_2\}=\overline{\D}(c, \rho).
$$

The study on the third derivative of bounded analytic functions in this paper is not exhaustive but could, in our opinion, serve as a basis for further investigations such as the subordination and the extremal problems.
\section{Proof of the third order Dieudonn\'e Lemma}
We begin this section with some fundamental knowledge which is convenient for
understanding the proof of Theorem \ref{thm:lemthird}.
First, we give an introduction to the definition of Peschl invariant derivatives.
For $g:\mathbb{D}\to \mathbb{D}$ holomorphic, Peschl \cite{peschl1955invariants} defined the so-called Peschl's invariant derivatives $D_n g(z)$ with respect to the hyperbolic metric by the Taylor series expansion:
$$z\ra \frac{g(\frac{z+z_0}{1+\overline{z}_0 z})-g(z_0)}{1-\overline{g(z_0)}g(\frac{z+z_0}{1+\overline{z}_0 z})}=\sum_{n=1}^{\infty}\frac{D_n g(z_0)}{n!}z^n,\quad z, z_0\in \D.$$

For example, precise forms of $D_n g(z)$, $n=1,2,3$, are given by
\begin{align*}
D_1 g(z)&=\frac{(1-\abs{z}^2)g'(z)}{1-\abs{g(z)}^2},\\
D_2 g(z)&=\frac{(1-\abs{z}^2)^2}{1-\abs{g(z)}^2}
\Bigg[g''(z)-\frac{2\overline{z}g'(z)}{1-\abs{z}^2}
+\frac{2\overline{g(z)}g'(z)^2} {1-\abs{g(z)}^2}\Bigg],\\
D_3 g(z)&=\frac{(1-\abs{z}^2)^3}{1-\abs{g(z)}^2}
\Bigg[g'''(z)-\frac{6\overline{z}g''(z)}{1-\abs{z}^2}
+\frac{6\overline{g(z)}g'(z)g''(z)} {1-\abs{g(z)}^2}
+\frac{6\overline{z}^2g'(z)}{(1-\abs{z}^2)^2}\\
&\qquad -\frac{12\overline{g(z)}g'(z)^2} {(1-\abs{z}^2)(1-\abs{g(z)}^2)}
+\frac{6\overline{g(z)}^2g'(z)^3} {(1-\abs{g(z)}^2)^2}\Bigg].
\end{align*}

In 2012, Cho, Kim and Sugawa \cite{cho2012multi} proved the following inequality in terms of Peschl's
invariant derivatives, from which we can derive a concrete inequality for $g'''(z)$ in terms of $z, g(z), g'(z)$ and $ g''(z)$.
\begin{lem}[\cite{cho2012multi}]\label{lem:cho}
If $g:\mathbb{D}\to \mathbb{D}$ is holomorphic, then
\begin{equation}
\left|\frac{D_3 g(z)}{6}(1-|D_1 g(z)^2|+\overline{D_1 g(z)}\left(\frac{D_2 g(z)}{2}\right)^2\right|+\left|\frac{D_2 g(z)}{2}\right|^2\leq(1-|D_1 g(z)|^2)^2,
\end{equation}
equality holds for a point $z\in \mathbb{D}$ if and only if $g$ is a Blaschke product of degree at most 3.
\end{lem}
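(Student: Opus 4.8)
The plan is to reduce the whole inequality to a single application of the Schwarz--Pick Lemma, exploiting that the Peschl derivatives $D_n g(z_0)$ are, by definition, the Taylor coefficients at the origin of the doubly normalized composition
\[
G(z) = \frac{g\!\left(\dfrac{z+z_0}{1+\ov{z_0}z}\right) - g(z_0)}{1 - \ov{g(z_0)}\,g\!\left(\dfrac{z+z_0}{1+\ov{z_0}z}\right)},
\]
so that $D_n g(z_0) = G^{(n)}(0)$. Being a composition of the self-map $g$ with two automorphisms of $\D$, the map $G$ is again a holomorphic self-map of $\D$, and $G(0)=0$; thus $G\in\H_0$. Moreover $G$ is a finite Blaschke product of a given degree if and only if $g$ is, because pre- and post-composition with disk automorphisms preserves both the Blaschke property and the degree. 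This last remark is what will let me push the extremal condition back from $G$ to $g$.

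First I would set $\omega(z)=G(z)/z$. By the Schwarz Lemma $\omega$ is a Schur function (a holomorphic map $\D\to\ov{\D}$), and comparing power series gives $\omega(0)=d_1$, $\omega'(0)=d_2/2$, $\omega''(0)=d_3/3$, where I abbreviate $d_n=D_n g(z_0)$. Writing $a:=\omega(0)$, $b:=\omega'(0)$, $c:=\omega''(0)$, the target inequality takes the equivalent form
\[
\left|\frac{c}{2}\left(1-|a|^2\right) + \ov{a}\,b^2\right| + |b|^2 \le \left(1-|a|^2\right)^2 .
\]
If $|a|=1$, the maximum principle forces $\omega$ to be a unimodular constant, hence $b=c=0$ and both sides vanish; here $G(z)=az$ is a degree-one Blaschke product, equality holds, and $g$ is Blaschke of degree $\le 3$.

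Assuming now $|a|<1$, the key step is to run one step of the Schur algorithm on $\omega$: with $A:=1-|a|^2$ I put
\[
\omega_1(z) = \frac{1}{z}\cdot\frac{\omega(z)-a}{1-\ov{a}\,\omega(z)},
\]
which is again a Schur function. A short expansion near the origin yields $\omega_1(0)=b/A$ and $\omega_1'(0)=\frac{1}{A}\left(\frac{c}{2}+\frac{\ov{a}\,b^2}{A}\right)$. Applying the first-order Schwarz--Pick estimate $|\omega_1'(0)|\le 1-|\omega_1(0)|^2$ and clearing the denominators $A$ produces exactly $\left|\frac{c}{2}A + \ov{a}\,b^2\right| \le A^2 - |b|^2$, which rearranges to the displayed inequality; translating $a,b,c$ back into $d_1,d_2,d_3$ recovers the statement. (Equivalently, this is the second-order invariant bound $|D_2\omega|\le 2(1-|D_1\omega|^2)$ applied to $\omega=G/z$.)

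For the equality clause I would trace the one inequality that was used: when $|a|<1$, equality holds if and only if equality holds in Schwarz--Pick for $\omega_1$, i.e. $\omega_1$ is a disk automorphism or a unimodular constant, hence a Blaschke product of degree $\le 1$. Undoing the Schur step then shows $\omega$ is Blaschke of degree $\le 2$, so $G(z)=z\,\omega(z)$ is Blaschke of degree $\le 3$, and by the invariance remark above so is $g$; the converse is immediate by reversing these reductions. I expect the only genuine bookkeeping obstacles to be the two Taylor expansions (for $\omega$ and for $\omega_1$) and the degree tracking through the Schur step and the automorphism compositions in the equality analysis; all the analytic content is carried by the Schwarz--Pick Lemma.
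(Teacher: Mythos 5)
Your argument is correct, and I verified the two Taylor expansions you flagged as the main bookkeeping risk: with $a=D_1g(z_0)$, $b=D_2g(z_0)/2$, $c=D_3g(z_0)/3$ one indeed gets $\omega_1(0)=b/A$ and $\omega_1'(0)=\frac1A\bigl(\frac c2+\frac{\ov a b^2}{A}\bigr)$, so the single Schwarz--Pick estimate $|\omega_1'(0)|\le 1-|\omega_1(0)|^2$ multiplied by $A^2$ gives exactly the claimed inequality, and the equality tracking through the Schur step and the automorphism conjugations is sound (including the degenerate cases $|a|=1$ and $\omega_1$ a unimodular constant). Note, however, that the paper itself offers no proof of this lemma: it is imported verbatim from Cho--Kim--Sugawa \cite{cho2012multi}, so there is no internal argument to compare against. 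In the cited source the inequality is a special case (all nodes coalescing at one point) of a general multi-point Schwarz--Pick lemma built from iterated hyperbolic divided differences; your proof is essentially that same mechanism stripped down to one explicit step of the Schur algorithm applied to $\omega=G/z$, which is shorter and self-contained but does not yield the more general multi-point statement. Two small remarks: the normalization $G^{(n)}(0)=D_ng(z_0)$ you rely on is exactly the paper's definition of the Peschl derivatives, so that step is free; and the displayed inequality in the paper contains a typo (the bars in $(1-|D_1g(z)^2|$ are misplaced and a parenthesis is missing) --- your reading $(1-|D_1g(z)|^2)$ is the intended and correct one, as confirmed by the way the paper later uses the lemma to derive \eqref{eq:f'''(r)}.
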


Before giving the proof of Theorem \ref{thm:lemthird}, we have the following argument which helps us to simplify the situation. For brevity, we assume that $z_0=re^{i \vp}, w_0=s e^{i \xi}\in \D$. Define
the `rotation function'
$\tilde{f}(z)=e^{-i \xi}f(e^{i \vp} z)$, then we have $\tilde{f}'(r)=e^{i(\vp-\xi)}f'(z_0)\in \Delta(r,s)$, $\tilde{f}''(r)=e^{i(2\vp-\xi)}f''(z_0)$ and $\tilde{f}'''(r)=e^{i(3\vp-\xi)}f'''(z_0)$.
So we can relabel $\tilde{f}$ as $f$, and assume that
$$z_0=r,w_0=s,w_1=\frac{s}{r}+\frac{r^2-s^2}{r(1-r^2)}\lambda, \quad \lambda\in \ov{\D},$$
\begin{align*}
w_2&=\frac{2(r^2-s^2)}{r^2(1-r^2)^2}(\lambda(1-s\lambda)+r(1-|\lambda|^2)\mu),\quad \mu\in \ov{\D}.
\end{align*}
Correspondingly,
we define $c_0$ and $\rho_0$ by
\begin{equation*}
\left\{
\begin{aligned}
c_0&=c_0(r,s,\lambda,\mu)
=A\left(B+r\mu(1-|\lambda|^2)(1+r^2-2 s\lambda-r \ov{\lambda}\mu)\right);\\
\rho_0&=\rho_0(r,s,\lambda,\mu)=Ar^2(1-|\lambda|^2)(1-|\mu|^2),
\end{aligned}
\right.
\end{equation*}
where
\begin{equation}\label{eq:AB}
A=\frac{6(r^2-s^2)}{r^3(1-r^2)^3},\quad
B=s^2 \lambda^3-s(1+r^2)\lambda^2+r^2\lambda.
\end{equation}
Assume that $g(z)=f(z)/z$, then $g$ is an analytic self-map of $\D$. A straight computations shows that
$D_1 g(r)=\lambda$, $D_2 g(r)=2\mu(1-|\lambda|^2)$ and
$$
D_3 g(r)=\frac{r(1-r^2)^3}{r^3(r^2-s^2)}f'''(r)
+\frac{6b}{r^2},
$$
where $$
b=-s^2\lambda^3+s(1+r^2)\lambda^2-r^2\lambda
     +r\mu(-1-r^2+2s\lambda)(1-|\lambda|^2).
$$
From Lemma \ref{lem:cho}, we have
$$|\frac{D_3 g(r)}{6}+\overline{\lambda}\mu^2(1-|\lambda|^2)|\leq(1-|\lambda|^2)(1-|\mu|^2),$$
Then we obtain
$$|f'''(r)+\frac{6(r^2-s^2)}{r^3(1-r^2)^3}(b+r^2\overline{\lambda}\mu^2(1-|\lambda|^2))|
\leq \frac{6(r^2-s^2)}{r(1-r^2)^3}(1-|\lambda|^2)(1-|\mu|^2),$$
which is
\begin{equation}\label{eq:f'''(r)}
|f'''(r)-c_0|\le \rho_0.
\end{equation}
Equality in \eqref{eq:f'''(r)} holds if and only if $f(z)=zg(z)$, where
$g$ is a Blaschke product of degree $1,2$ or $3$ and satisfies

\begin{equation}\label{condition2}
\left\{
\begin{aligned}
g(r)&=\frac{s}{r};\\
g'(r)&=\frac{r^2-s^2}{r^2(1-r^2)}\lambda;\\
g''(r)&=\frac{2(r^2-s^2)}{r^3(1-r^2)^2}(-s\lambda^2+r^2\lambda+r\mu(1-|\lambda|^2)).
\end{aligned}
\right.
\end{equation}
Now we are ready to give the proof of Theorem \ref{thm:lemthird}.
\begin{proof}[Proof of Theorem \ref{thm:lemthird}]

By using the same method in the proof of \cite[Lemma 2.2]{chen_2019}, we can easily prove Case (1) and (2), so we omit the proofs here.

(3)The inequality \eqref{eq:f'''(r)} means that $f'''(r)$ lies in $\overline{\D}(c_0,\rho_0)$.
 To show that $\overline{\D}(c_0,\rho_0)$ is covered, let $\alpha \in \ov{\D}$, $u_0=s/r$ and set
 $f(z)=zg(z)$, where
 $$g(z)=T_{u_0}\l(T_{-r}(z) T_{\lambda}(T_{-r}(z) T_{\mu}(\a T_{-r}(z)))\r).$$
Then $f(0)=0$ and $f(r)=s$. Next we need to show that $f'(r)=w_1$.
Note that
\begin{equation}\label{eq:T-g}
T_{-u_0}\circ g(z)=T_{-r}(z) T_{\lambda}(T_{-r}(z) T_{\mu}(\a T_{-r}(z))).
\end{equation}
Differentiating both sides, we get
\begin{equation}\label{g'}
\begin{aligned}
(T_{-u_0})'(g(z))g'(z)&=T_{-r}'(z) T_{\lambda}(T_{-r}(z) T_{\mu}(\a T_{-r}(z)))\\
&\quad +T_{-r}(z) T_{\lambda}'(T_{-r}(z) T_{\mu}(\a T_{-r}(z))) \\
&(T_{-r}'(z) T_{\mu}(\a T_{-r}(z))+T_{-r}(z) T_{\mu}'(\a T_{-r}(z))\a T_{-r}'(z))).
\end{aligned}
\end{equation}
for all $z\in \D$.
Substituting $z=r$ into this equation, we have
$$(T_{-u_0})'(g(r))g'(r)=T_{-r}'(z_0) T_{\mu}(0),$$
which implies
$$g'(r)=\frac{(r^2-s^2)\lambda}{r^2(1-r^2)}.$$
Thus, we obtain that $f$ satisfies
$$f'(r)=g(r)+rg'(r)=w_1.$$
Similarly, differentiating both sides of \eqref{g'}, we obtain
\begin{equation}\label{appendix:g''}
\begin{aligned}
&(T_{-u_0})''(g(z)) (g'(z))^2+(T_{-u_0})'(g(z)) g''(z)\\
&\quad=T_{-r}''(z) T_{\lambda}(T_{-r}(z) T_{\mu}(\a T_{-r}(z)))\\
&\quad\quad+2T_{-r}'(z) T_{\lambda}'(T_{-r}(z) T_{\mu}(\a T_{-r}(z)))(T_{-r}'(z) T_{\mu}(\a T_{-r}(z))+T_{-r}(z) T_{\mu}'(\a T_{-r}(z))\a T_{-r}'(z)))\\
&\quad\quad+T_{-r}(z) T_{\lambda}''(T_{-r}(z) T_{\mu}(\a T_{-r}(z)))(T_{-r}'(z) T_{\mu}(\a T_{-r}(z))+T_{-r}(z) T_{\mu}'(\a T_{-r}(z))\a T_{-r}'(z))^2\\
&\quad\quad+T_{-r}(z) T_{\lambda}'(T_{-r}(z) T_{\mu}(\a T_{-r}(z)))\cdot\\
&\qquad\qquad\big(T_{-r}''(z) T_{\mu}(\a T_{-r}(z)))+2T_{-r}'(z) T_{\mu}'(\a T_{-r}(z))\a T_{-r}'(z))\\
&\qquad\qquad +T_{-r}(z) T_{\mu}''(\a T_{-r}(z))(\a T_{-r}'(z))^2+T_{-r}(z) T_{\mu}'(\a T_{-r}(z))\a T_{-r}''(z)\big),\quad z\in \D.
\end{aligned}
\end{equation}
Substituting $z=r$ into the above equation,
\begin{align*}
&(T_{-u_0})''(g(r)) (g'(r))^2+(T_{-u_0})'(g(r))g''(r)\\
&=T_{-r}''(r) T_{\mu}(0)+2T_{-r}'(r) T_{\lambda}'(0)(T_{-r}'(z) T_{\lambda}(\a T_{-r}(z)).
\end{align*}
We get that
$$g''(r)=\frac{2(r^2-s^2)}{r^3(1-r^2)^2}(-s\lambda^2+r^2\lambda+r\mu(1-|\lambda|^2)).$$
The above, in conjunction with $
f''(z)=2g'(z)+z g''(z)$, immediately yields $f''(r)=w_2$.

Next we determine the form of $f'''(r)$. Differentiating both sides of \eqref{appendix:g''},
\begin{equation}\label{appendix:g'''}
\begin{aligned}
&(T_{-u_0})'''(g(z)) (g'(z))^3+3(T_{-u_0})''(g(z)) g''(z)+T_{-u_0}'''(g(z)) g'''(z)\\
&\quad=T_{-r}'''(z) T_{\lambda}(T_{-r}(z) T_{\mu}(\a T_{-r}(z)))\\
&\quad\quad+3T_{-r}''(z) T_{\lambda}'(T_{-r}(z) T_{\mu}(\a T_{-r}(z)))(T_{-r}'(z) T_{\mu}(\a T_{-r}(z))+T_{-r}(z) T_{\mu}'(\a T_{-r}(z))\a T_{-r}'(z)))\\
&\quad\quad+3T_{-r}'(z) T_{\lambda}''(T_{-r}(z) T_{\mu}(\a T_{-r}(z)))(T_{-r}'(z) T_{\mu}(\a T_{-r}(z))+T_{-r}(z) T_{\mu}'(\a T_{-r}(z))\a T_{-r}'(z))^2\\
&\quad\quad+3T_{-r}'(z) T_{\lambda}'(T_{-r}(z) T_{\mu}(\a T_{-r}(z)))\cdot\\
&\qquad\qquad\big(T_{-r}''(z) T_{\mu}(\a T_{-r}(z)))+2T_{-r}'(z) T_{\mu}'(\a T_{-r}(z))\a T_{-r}'(z))\\
&\qquad\qquad +T_{-r}(z) T_{\mu}''(\a T_{-r}(z))(\a T_{-r}'(z))^2+T_{-r}(z) T_{\mu}'(\a T_{-r}(z))\a T_{-r}''(z)\big)+\\
&\qquad T_{-r}(z) T_{\lambda}'''(T_{-r}(z) T_{\mu}(\a T_{-r}(z)))(T_{-r}'(z) T_{\mu}(\a T_{-r}(z))+T_{-r}(z) T_{\mu}'(\a T_{-r}(z))\a T_{-r}'(z)))^3\\
&\qquad 3T_{-r}(z) T_{\lambda}''(T_{-r}(z) T_{\mu}(\a T_{-r}(z)))(T_{-r}'(z) T_{\mu}(\a T_{-r}(z))+T_{-r}(z) T_{\mu}'(\a T_{-r}(z))\a T_{-r}'(z))\\
&\qquad\qquad\big(T_{-r}''(z) T_{\mu}(\a T_{-r}(z)))+2T_{-r}'(z) T_{\mu}'(\a T_{-r}(z))\a T_{-r}'(z))\\
&\qquad\qquad +T_{-r}(z) T_{\mu}''(\a T_{-r}(z))(\a T_{-r}'(z))^2+T_{-r}(z) T_{\mu}'(\a T_{-r}(z))\a T_{-r}''(z)\big)\\
&\qquad T_{-r}(z) T_{\lambda}'(T_{-r}(z) T_{\mu}(\a T_{-r}(z)))\cdot\\
&\qquad \big(T_{-r}'''(z) T_{\mu}(\a T_{-r}(z))+3T_{-r}''(z) T_{\mu}'(\a T_{-r}(z))\a T_{-r}'(z)
 +3T_{-r}'(z) T_{\mu}''(\a T_{-r}(z))(\a T_{-r}'(z))^2\\
&\qquad\qquad+3T_{-r}'(z) T_{\mu}'(\a T_{-r}(z))\a T_{-r}''(z)+T_{-r}(z) T_{\mu}'''(\a T_{-r}(z))(\a T_{-r}'(z))^3\\
&\qquad\qquad\qquad\qquad+3T_{-r}(z) T_{\mu}''(\a T_{-r}(z))\a^2 T_{-r}'(z)T_{-r}''(z)+T_{-r}(z) T_{\mu}'(\a T_{-r}(z))\a T_{-r}''''(z)\big).
\end{aligned}
\end{equation}
 and then
substituting $z=r$ into \eqref{appendix:g'''}, we have
\begin{align*}
&(T_{-u_0})'''(g(r)) (g'(r))^3+3(T_{-u_0})''(g(r)) g''(r)+T_{-u_0}'''(g(r)) g'''(r)\\
&\quad=T_{-r}'''(r) T_{\lambda}(0)+3T_{-r}''(r) T_{\lambda}'(0)(T_{-r}'(r) T_{\mu}(0))\\
&\quad\quad+3T_{-r}'(r) T_{\lambda}''(0)(T_{-r}'(r) T_{\mu}(0))^2+3T_{-r}'(r) T_{\lambda}'(0)\big(T_{-r}''(r) T_{\mu}(0)+2(T_{-r}'(r))^2 T_{\mu}'(0)\a \big).
\end{align*}
Together with $f'''(z)=3g''(z)+z g'''(z)$, we obtain
$$
f'''(r)= c_0+\rho_0\alpha.$$
Now $\alpha \in \ov{\D}$ is arbitrary, so the closed disk $\overline{\D}(c_0,\rho_0)$ is covered.

We know that $f'''(r)\in \partial \D(c_0, \rho_0)$ if and only if $f(z)=zg(z)$, where $g$ is a Blaschke product of degree 3 satisfying \eqref{condition2}, and then
we apply this fact to determine the precise form of $g$. Set
$$h(z)=\dfrac{T_{-u_0}\circ g \circ T_{r}(z)}{z},\quad z\in \D.$$
Clearly, $h$ is a Blaschke product of degree 2 depending on $g$ and satisfying
$$h(0)=(T_{-u_0}\circ g \circ T_{z_0})'(0)=v_0=\lambda.$$
Then $H(z)=T_{-v_0}\circ h(z)$ is a Blaschke product of degree 2 fixing $0$.  Set
$$G(z)=\frac{H(z)}{z}.$$
Obviously, $G$ is an automorphism of $\D$ depending on $g$ and satisfying
$$G(0)=H'(0)=T_{-v_0}'(v_0) h'(0)=\mu.$$
Thus $T_{-\mu}\circ G$ is an automorphism of $\D$ fixing 0,
which means that $T_{-\mu}\circ G(z)=e^{i \theta} z$ for $z\in \D$ and $\theta\in \R$.
Now it is easy to check that
$$g(z)=T_{u_0}\l(T_{-r}(z) T_{\lambda}(T_{-r}(z) T_{\mu}(\e T_{-r}(z)))\r), \quad z\in \D.$$
Conversely, if
$f(z)=z T_{u_0}\l(T_{-r}(z) T_{\lambda}(T_{-r}(z) T_{\mu}(\e T_{-r}(z)))\r)$, where $\th \in \R$, then direct calculations gives
$$f'''(r)=c_0+\rho_0 e^{i \theta}\in \partial \D(c_0, \rho_0).$$
Hence we complete the proof of this theorem.
\end{proof}
In fact, the immediate consequence of the argument above is the following corollary.
\begin{cor}\label{cor:third}
Let $0\le s<r<1$, $\lambda, \mu\in \ov{\D}$ with $w_1=\dfrac{s}{r}+\dfrac{r^2-s^2}{r(1-r^2)}\lambda$, $$w_2=\dfrac{2(r^2-s^2)}{r^2(1-r^2)^2}(\lambda(1-s\lambda)+r(1-|\lambda|^2)\mu).$$
 Suppose that $f\in\mathcal{H}_0$, $f(r)= s$, $f'(r)=w_1$ and $f''(r)=w_2$. Set $u_0=s/r$ and $v_0=\lambda$.
\begin{enumerate}
\item If $|\lambda|=1$, then $f'''(r)=c_0$ and $f(z)=z T_{u_0}(\lambda T_{-r}(z))$.

\item If $|\lambda|<1$, $|\mu|=1$, then $f'''(r)=c_0$ and $f(z)=z T_{u_0}\l(T_{-r}(z) T_{\lambda}(\mu T_{-r}(z))\r)$.

\item If $|\lambda|<1$, $|\mu|<1$, then the region of values of $f'''(z_0)$ is the closed disk
$\overline{\D}(c_0, \rho_0)$.
Furthermore, $f'''(z_0)\in \p\D(c_0, \rho_0)$ if and only if
$f(z)=z T_{u_0}\l(T_{-r}(z) T_{\lambda}(T_{-r}(z) T_{\mu}(\e T_{-r}(z)))\r)$, where $\theta \in \R$.
\end{enumerate}
\end{cor}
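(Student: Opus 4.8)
The plan is to recognize that Corollary \ref{cor:third} is exactly the normalized form of Theorem \ref{thm:lemthird} obtained by taking $z_0=r$ and $w_0=s$ to be nonnegative reals. In that case $u_0=w_0/z_0=s/r$, $v_0=r^2\lambda/z_0^2=\lambda$, the rotation factor $\ov{z}_0/z_0$ collapses to $1$, and the correction term in $\eta$ vanishes because $r^2w_0-z_0^2\ov{w_0}=r^2s-r^2s=0$, so that $\eta=\mu$. Consequently the entire argument can be run directly in the real-base setting, and indeed this is precisely the computation already performed in the body of the proof of Theorem \ref{thm:lemthird} before the final relabelling. So the first step is to set $g(z)=f(z)/z$, which is an analytic self-map of $\D$, and to record that the normalized hypotheses on $f(r),f'(r),f''(r)$ are equivalent to prescribed values of $g(r),g'(r),g''(r)$.

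Next I would translate these into Peschl's invariant derivatives. Direct substitution into the formulas for $D_n g$ gives $D_1 g(r)=\lambda$, $D_2 g(r)=2\mu(1-|\lambda|^2)$, and an expression for $D_3 g(r)$ that is affine in $f'''(r)$, namely $D_3 g(r)=\frac{(1-r^2)^3}{r^2(r^2-s^2)}f'''(r)+\frac{6b}{r^2}$ with $b$ as displayed. Feeding these three quantities into the inequality of Lemma \ref{lem:cho} and simplifying, the bound collapses to $|f'''(r)-c_0|\le\rho_0$ once the constants $A,B$ of \eqref{eq:AB} are absorbed. This shows $f'''(r)\in\overline{\D}(c_0,\rho_0)$ and already settles the containment half of case (3).

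For the three cases I would argue according to where equality can hold in Lemma \ref{lem:cho}, whose extremal functions are the Blaschke products of degree at most $3$. When $|\lambda|=1$ the quantity $D_1 g(r)$ has modulus $1$, forcing $g$ to be a disk automorphism, which pins down $f(z)=z\,T_{u_0}(\lambda T_{-r}(z))$ and the single value $f'''(r)=c_0$; when $|\lambda|<1$ but $|\mu|=1$ the second Peschl derivative is extremal and $g$ must be a degree-$2$ Blaschke product, giving the stated form and again $f'''(r)=c_0$. These follow the method of \cite[Lemma 2.2]{chen_2019}. For case (3) I would prove coverage constructively: for each $\alpha\in\overline{\D}$ set $f(z)=z\,T_{u_0}\l(T_{-r}(z) T_{\lambda}(T_{-r}(z) T_{\mu}(\alpha T_{-r}(z)))\r)$, check $f(r)=s$, $f'(r)=w_1$, $f''(r)=w_2$ by repeated differentiation and evaluation at $z=r$ (where every term containing an undifferentiated $T_{-r}(r)=0$ drops out), and verify $f'''(r)=c_0+\rho_0\alpha$; letting $\alpha$ range over $\overline{\D}$ sweeps out $\overline{\D}(c_0,\rho_0)$. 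Finally the boundary points $f'''(r)\in\p\D(c_0,\rho_0)$ correspond to degree-$3$ Blaschke products, and a peeling argument — passing from $g$ to $h(z)=(T_{-u_0}\circ g\circ T_r)(z)/z$, then $H=T_{-v_0}\circ h$, then $G=H/z$, each step reducing the degree by one until an automorphism $T_{-\mu}\circ G(z)=e^{i\theta}z$ remains — recovers the claimed closed form with $\alpha=e^{i\theta}$.

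The main obstacle is computational: verifying that the constructed family attains $f'''(r)=c_0+\rho_0\alpha$ requires differentiating the triple composition $T_{u_0}(\cdots)$ three times and then showing that, at $z=r$, the surviving terms assemble into exactly the constants $c_0$ and $\rho_0$. The bookkeeping is heavy but structurally routine, since the vanishing of $T_{-r}(r)$ kills most summands; the only genuine care is needed in matching the resulting coefficient of $\alpha$ with $\rho_0$ and the constant term with $c_0$, which is precisely the identity already verified in the proof of Theorem \ref{thm:lemthird}.
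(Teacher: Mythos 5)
Your proposal is correct and follows essentially the same route as the paper: the paper presents Corollary \ref{cor:third} as an immediate consequence of the argument in the proof of Theorem \ref{thm:lemthird}, which is itself carried out precisely in the normalized setting $z_0=r$, $w_0=s$ after the rotation reduction, using the Peschl-derivative computation with Lemma \ref{lem:cho}, the constructive coverage via the explicit three-fold composition, and the degree-peeling identification of extremals. Your additional observation that $\eta=\mu$ in this setting (since $r^2w_0-z_0^2\ov{w_0}=0$) correctly explains why the corollary's extremal form simplifies as stated.
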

In addition, we obtain a sharp upper bound of $|f'''(r)|$ for Case (1).
\begin{rmk}
For $|\lambda|=1$,
\begin{align*}
|f'''(r)|
    &=\frac{6(r^2-s^2)}{r^3(1-r^2)^3}|(1+r^2)s\lambda^2-s^2\lambda^3-r^2\lambda|\\
    &\le \frac{6(r^2-s^2)}{r^3(1-r^2)^3}[(1+r^2)s+s^2+r^2],
\end{align*}
and equality holds if and only if $\lambda=-1$, or if and only if
$$
f(z)=-\frac{z-a}{1-az},$$
where $a=\dfrac{r^2+s}{r(1+s)}$.
\end{rmk}
We end this section by asking the meaningful question: is it possible to obtain a sharp upper bound for $|f'''(z)|$ depending only on $z$?

\section{Envelope of a family of circles}
Let $\beta\in \D$, we begin with analyzing the structure of the variability region
$$V(z_0,w_0,\beta)=\{f'''(z_0):f\in \H_0(z_0,w_0,\beta)\},$$
where
$$\H_0 (z_0,w_0,\beta) =  \{ f \in \H_0 : f(z_0) =w_0, f'(z_0)=\frac{w_0}{z_0}+\frac{r^2-s^2}{z_0(1-r^2)}\b\}.$$
We observe that the relation
$V(r,s,\lambda )=e^{i(3\vp-\xi)}V(z_0,w_0,\beta)$ holds, where $z_0=re^{i \vp}, w_0=s e^{i \xi}\in \D$ with $s<r$, then
it is sufficient to determine the variability region $V(r,s,\lambda )$, $\lambda\in \D$. Next we present some basic properties of it.

Since the class $\mathcal{H}_0(r,s,\lambda)$ is a compact convex subset of the linear space $\mathcal{A}$ of all analytic functions
in $\mathbb{D}$ endowed with the topology of
uniform convergence on compact subsets of $\mathbb{D}$ and $V(r,s,\lambda)$ is the image of $\mathcal{H}_0(r,s,\lambda)$ with respect to the continuous linear functional $\ell : \mathcal{A} \ni f \mapsto f'''(r) \in \mathbb{C}$,
$V(r,s,\lambda) = \ell ( \mathcal{H}_0 (r,s,\lambda))$ is also
a compact convex subset of $\mathbb{C}$. Furthermore, we claim that $V(r,s,\lambda )$ has nonempty interior, because
$\ov{\D}(AB,Ar^2(1-|\lambda|^2))\subset V(r,s,\lambda)$, where $A,B$
 are defined in \eqref{eq:AB}. Thus $V(r,s,\lambda )$ is a convex closed domain enclosed by the Jordan curve $\partial V(r,s,\lambda )$.

We define $c(\zeta),\rho(\zeta)$ and $V$ by
\begin{equation}\label{eq:c(zeta)-rho(zeta)}
c(\zeta)=\zeta (1-\eta\zeta),\quad \rho(\zeta)=t(1-|\zeta|^2),\quad V
 = \bigcup_{\zeta \in \overline{\mathbb{D}}}
  \overline{\mathbb{D}}(c( \zeta), \rho(\zeta )),
\end{equation}
where
$$\eta=\frac{r\ov{\lambda}}{1+r^2-2s\lambda},\quad  t=\frac{r}{|1+r^2-2s\lambda|}.$$
We remark that $\eta \in \C$, which is different from the case in \cite{chen2020}.
Then by the third order Dieudonn\'e Lemma, we have
$$V(r,s,\lambda ) = A
  \left(B+C V\right),$$
  where
\begin{equation}\label{eq:C}
C=r(1-|\lambda|^2)(1+r^2-2s\lambda)\in \C.
\end{equation}

 We claim that the set $V$ has the same properties as $V(r,s,\lambda )$.
Firstly, it is not difficult to see that
$V$ contains $\overline{\mathbb{D}}(0,t)$.
Secondly, the compactness and convexity of $V$ follows from
the fact $V$ corresponds to  the variability region
$V(r,s,\lambda )$.
Therefore we reduce the determination of $\partial V(r,s,\lambda)$ to
that of $V$.

Using the same method in \cite{chen2020}, we obtain the result below, analogous to \cite[Proposition 2.1 and 2.3]{chen2020}, which gives the parameter representation of $\p V$.
\begin{prop}
\label{prop:boundary-V}
For $\theta \in \mathbb{R}$, let
$t_\theta$ be the unique solution to the
equation
\begin{equation}
\label{eq:definig_equation_of_r_theta}
   |xe^{i\theta} - \ov{\eta}| = 2(x^2-|\eta|^2) , \quad x >|\eta| ,
\end{equation}
if $|xe^{i\theta} -\ov{\eta}| \geq  2(x^2-|\eta|^2) $;
otherwise let $t_\theta =t$.
Set
\begin{equation}
\label{eq:def_of_zeta_theta}
  \zeta_\theta = \frac{t_\theta e^{i\theta} -\ov{\eta} }{2(t_\theta^2-|\eta|^2)}
   \in \overline{\mathbb{D}} .
\end{equation}
Then $V$ is a convex closed domain enclosed by the Jordan curve $\partial V$.
Furthermore, $v_\theta\in \p V$ can be expressed as
\begin{equation}
\label{eq:expression_of_v_theta}
   v_\theta
    =
    \begin{cases}
     c( \zeta_\theta )  + \rho( \zeta_\theta ) e^{i \theta },  \quad &
     |te^{i \theta}-\ov{\eta}| < 2(t^2-|\eta|^2) , \\
    c( \zeta_\theta ) ,  \quad &
     |te^{i \theta}-\ov{\eta}| \geq 2(t^2-|\eta|^2) ,
    \end{cases}
\end{equation}
and the mapping
\[
   (-\pi , \pi ] \ni \th \mapsto v_\theta \in \partial V
\]
is a continuous bijection and gives a parametric representation of
$\partial V$.
\end{prop}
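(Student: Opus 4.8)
The plan is to realize $\partial V$ through the support function of the convex body $V$. Since $V$ is compact and convex with nonempty interior, for each direction $e^{i\theta}$ there is a unique supporting line with outer normal $e^{i\theta}$, and its contact point is the boundary point $v_\theta$ we seek. Because $V$ is the union of the closed disks $\overline{\mathbb{D}}(c(\zeta),\rho(\zeta))$, the support point of $V$ in the direction $e^{i\theta}$ is found by taking on each disk the farthest point $c(\zeta)+\rho(\zeta)e^{i\theta}$ in that direction and then maximizing over $\zeta$. Thus the whole proposition reduces to analyzing
$$G_\theta(\zeta)=\Re\!\big(e^{-i\theta}c(\zeta)\big)+\rho(\zeta)=\Re\!\big(e^{-i\theta}(\zeta-\eta\zeta^2)\big)+t(1-|\zeta|^2),\qquad \zeta\in\overline{\mathbb{D}},$$
and identifying its maximizer $\zeta_\theta$; the boundary point is then $v_\theta=c(\zeta_\theta)+\rho(\zeta_\theta)e^{i\theta}$, with the second case of \eqref{eq:expression_of_v_theta} being exactly the situation $|\zeta_\theta|=1$, where $\rho(\zeta_\theta)=0$.

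First I would compute the Hessian of $G_\theta$ in the real coordinates of $\zeta$. The term $t(1-|\zeta|^2)$ contributes $-2t\,I$, while $\Re(e^{-i\theta}(\zeta-\eta\zeta^2))$ is harmonic, so its Hessian is trace-free with eigenvalues $\pm 2|\eta|$. Hence $G_\theta$ has Hessian eigenvalues $-2t\pm 2|\eta|$, both negative because $|\eta|=t|\lambda|<t$ (recall $\lambda\in\mathbb{D}$). Therefore $G_\theta$ is strictly concave and has a unique unconstrained critical point. Solving $\partial_{\bar\zeta}G_\theta=0$, i.e. $t\zeta+e^{i\theta}\bar\eta\,\bar\zeta=\tfrac12 e^{i\theta}$, together with its conjugate, yields $\zeta^{*}=(t e^{i\theta}-\bar\eta)/\big(2(t^2-|\eta|^2)\big)$. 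If $|\zeta^{*}|\le1$, i.e. $|te^{i\theta}-\bar\eta|\le 2(t^2-|\eta|^2)$, then $\zeta^{*}\in\overline{\mathbb{D}}$ is the constrained maximizer, so $\zeta_\theta=\zeta^{*}$ with $t_\theta=t$, giving the first case of \eqref{eq:expression_of_v_theta}.

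When $|te^{i\theta}-\bar\eta|> 2(t^2-|\eta|^2)$ the free maximizer leaves $\overline{\mathbb{D}}$, and by strict concavity the maximum of $G_\theta$ over $\overline{\mathbb{D}}$ is attained at a unique point of the circle $|\zeta|=1$, on which $\rho$ vanishes, so $v_\theta=c(\zeta_\theta)$. On $|\zeta|=1$ I would impose the Lagrange condition, which reproduces the previous stationarity equation with $t$ replaced by the multiplier $x$; requiring $|\zeta|=1$ then forces $|xe^{i\theta}-\bar\eta|=2(x^2-|\eta|^2)$, i.e. the defining equation \eqref{eq:definig_equation_of_r_theta}, after which $\zeta_\theta=(t_\theta e^{i\theta}-\bar\eta)/\big(2(t_\theta^2-|\eta|^2)\big)$ automatically has modulus $1$. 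The key technical point is to show that \eqref{eq:definig_equation_of_r_theta} has exactly one root $x=t_\theta$ in $(|\eta|,\infty)$ and that this root selects the maximizing (rather than a minimizing or saddle) point on the circle. Squaring turns the equation into a quartic, and I would track the sign changes of $4(x^2-|\eta|^2)^2-\big(x^2-2x\Re(e^{i\theta}\eta)+|\eta|^2\big)$ on $(|\eta|,\infty)$, using $\Re(e^{i\theta}\eta)\le|\eta|$ for the endpoint sign, to obtain existence and uniqueness, exactly as in \cite{chen2020}. I expect this branch-selection and uniqueness argument to be the main obstacle.

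Finally I would verify that $\theta\mapsto v_\theta$ is a continuous bijection onto $\partial V$. Continuity of $t_\theta$ follows from the implicit function theorem applied to \eqref{eq:definig_equation_of_r_theta}, and at the transition angles where $|te^{i\theta}-\bar\eta|=2(t^2-|\eta|^2)$ the two regimes agree, since there $t_\theta=t$, $|\zeta_\theta|=1$ and $\rho(\zeta_\theta)=0$, so both branches of \eqref{eq:expression_of_v_theta} give the same value; hence $v_\theta$ is continuous and $2\pi$-periodic. Because $G_\theta$ has a unique maximizer, the support point $v_\theta$ is unique for every $\theta$, so $V$ is strictly convex and each point of $\partial V$ occurs as some $v_\theta$, giving surjectivity. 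For injectivity I would exclude corners of $\partial V$ by showing that $v_\theta$ is non-stationary, equivalently that the outer normal direction varies injectively along the curve; combined with strict convexity this makes $\theta\mapsto v_\theta$ a continuous bijection and thus a parametric representation of the Jordan curve $\partial V$, completing the proof along the lines of \cite[Propositions 2.1 and 2.3]{chen2020}.
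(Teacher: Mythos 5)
Your proposal is correct and is essentially the same argument the paper relies on: the paper gives no proof of Proposition \ref{prop:boundary-V} beyond invoking "the same method in \cite{chen2020}", and that method is exactly your support-function computation --- maximizing $\Re(e^{-i\theta}c(\zeta))+\rho(\zeta)$ over $\overline{\mathbb{D}}$, using strict concavity (your eigenvalue bound $-2t\pm 2|\eta|$ with $|\eta|=t|\lambda|<t$) to get the interior critical point $\zeta^{*}=(te^{i\theta}-\ov{\eta})/(2(t^2-|\eta|^2))$, and the Lagrange condition on $|\zeta|=1$ to produce the defining equation \eqref{eq:definig_equation_of_r_theta}. The technical points you defer (uniqueness of $t_\theta$, branch selection, absence of corners) are precisely the ones the paper also defers to \cite[Propositions 2.1 and 2.3]{chen2020}, so your reconstruction matches the intended proof.
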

\section{Variability region for the third derivative}
With the auxiliary results in Sect. 3, we are ready to give the following theorem which gives the unified parametric representation of $\p V(r,s,\lambda)$ and the all extremal functions. It is worth mentioning that this result directly follows from Proposition \ref{prop:boundary-V} and is an analogue of \cite[Theorem 1.2]{chen2020}. Recall that $A,B,C$ are given in \eqref{eq:AB}, \eqref{eq:C} and $\zeta_\theta$ is defined in Proposition \ref{prop:boundary-V}.
\begin{thm}
\label{thm:boundary-curve}
Let $0 \leq s < r< 1$ and $|\lambda|<1$.
 Then $V(r,s,\lambda)$ is a convex closed domain enclosed by the Jordan curve $\partial V(r,s,\lambda)$ and a parametric representation
$(- \pi , \pi ] \ni \theta \mapsto \gamma ( \theta )$
of $\partial V(r,s,\lambda)$
 is given as follows.
\begin{enumerate}
 \item[{\rm (i)}]
If $t+|\eta|\leq \frac{1}{2} $, then $|te^{i \theta}-\ov{\eta}| \geq 2(t^2-|\eta|^2)$
for all $\theta \in \mathbb{R}$ and
$$
   \gamma( \theta )=
 A \left(B+C c( \zeta_\theta ) \right)   \in \partial V(r,s,\lambda).$$
  \item[{\rm (ii)}]
If $t-|\eta|\leq \frac{1}{2} $, then $|te^{i \theta}-\ov{\eta}| \le 2(t^2-|\eta|^2)$
for all $\theta \in \mathbb{R}$ and
 $$\gamma( \theta ) =
    A \left(B+C( c( \zeta_\theta )  + \rho(\zeta_\theta) e^{i \theta}) \right)   \in \partial V(r,s,\lambda).$$
 \item[{\rm (iii)}]
If  $t+|\eta| > \frac{1}{2}$ and $t-|\eta| < \frac{1}{2} $, then
\[
   \gamma( \theta ) =
  \begin{cases}
  A \left(B+C( c( \zeta_\theta )  + \rho(\zeta_\theta) e^{i \theta}) \right) , \quad
   & |te^{i \theta}-\ov{\eta}| < 2(t^2-|\eta|^2) , \\[2ex]
      A \left(B+C c( \zeta_\theta ) \right)  , \quad
   & |te^{i \theta}-\ov{\eta}|\ge 2(t^2-|\eta|^2).
  \end{cases}	
\]
\end{enumerate}
Furthermore,
\[
    f'''(r) =  A \left(B+C( c( \zeta_\theta )  + \rho(\zeta_\theta) e^{i \theta} )\right) \in \p V(r,s,\lambda),
\]
for some $\theta \in \mathbb{R}$ with $\zeta_\theta \in \mathbb{D}$
if and only if
$$
  f(z)=z T_{\frac{s}{r}}\l(T_{-r}(z) T_{\lambda}(T_{-r}(z) T_{\zth}(e^{i (\theta+\arg C)} T_{-r}(z)))\r), \quad z \in \mathbb{D}.
$$
Similarly
\[
    f'''(r) =   A \left(B+C c (\zeta_\theta) \right)\in \p V(r,s,\lambda),
\]
for some $\theta \in \mathbb{R}$ with $\zeta_\theta \in \partial \mathbb{D}$
if and only if
$$
  f(z)=z T_{\frac{s}{r}}\left(T_{-r}(z) T_{\lambda}(\zth T_{-r}(z))\right), \quad z \in \mathbb{D} .
$$
\end{thm}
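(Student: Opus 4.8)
The plan is to deduce the whole theorem from the identity $V(r,s,\lambda)=A(B+CV)$ recorded in Section 3, which presents $V(r,s,\lambda)$ as the image of the model region $V$ of \eqref{eq:c(zeta)-rho(zeta)} under the complex-affine map $\Phi(w)=A(B+Cw)$. First I would verify that $\Phi$ is a non-degenerate affine bijection of $\C$. Since $|\lambda|<1$ we have $1-|\lambda|^2>0$, and since $0\le s<r<1$ we get $2s|\lambda|\le 2s<2r<1+r^2$, whence $|1+r^2-2s\lambda|\ge 1+r^2-2s|\lambda|>0$; together with $r>0$ this gives $C=r(1-|\lambda|^2)(1+r^2-2s\lambda)\ne 0$ by \eqref{eq:C}, while $A>0$ by \eqref{eq:AB}. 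Being holomorphic and invertible, $\Phi$ is an orientation-preserving homeomorphism of $\C$ carrying convex sets to convex sets and Jordan curves to Jordan curves, with $\Phi(\partial V)=\partial\Phi(V)=\partial V(r,s,\lambda)$. Consequently every structural assertion of Proposition \ref{prop:boundary-V} transfers verbatim: $V(r,s,\lambda)$ is a convex closed domain bounded by the Jordan curve $\partial V(r,s,\lambda)$, and the continuous bijection $\theta\mapsto v_\theta$ furnished there yields the continuous bijection $\gamma(\theta)=\Phi(v_\theta)=A(B+Cv_\theta)$ parametrizing $\partial V(r,s,\lambda)$.

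Substituting the two-branch expression \eqref{eq:expression_of_v_theta} for $v_\theta$ into $\gamma(\theta)=A(B+Cv_\theta)$ produces at once the two formulas $A(B+C(c(\zeta_\theta)+\rho(\zeta_\theta)e^{i\theta}))$ and $A(B+Cc(\zeta_\theta))$ of the theorem, so it only remains to decide which branch is active in each regime. Because $|\eta|=|\lambda|\,t$ from \eqref{eq:c(zeta)-rho(zeta)} and $|\lambda|<1$, we have $t>|\eta|\ge 0$, so $|te^{i\theta}-\ov{\eta}|$ sweeps the interval $[\,t-|\eta|,\,t+|\eta|\,]$ as $\theta$ varies, while the threshold equals $2(t^2-|\eta|^2)=2(t-|\eta|)(t+|\eta|)$. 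Comparing the minimum $t-|\eta|$ and the maximum $t+|\eta|$ against this threshold reduces the branch selection to elementary one-variable inequalities: $|te^{i\theta}-\ov{\eta}|\ge 2(t^2-|\eta|^2)$ holds for every $\theta$ exactly when $t+|\eta|\le\frac{1}{2}$ (case (i), the center branch throughout), the reverse inequality holds for every $\theta$ exactly when $t-|\eta|\ge\frac{1}{2}$ (case (ii), the envelope branch throughout), and the complementary parameter range yields the mixed case (iii).

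The delicate part, and where I expect the bulk of the bookkeeping, is the identification of the extremal functions, which I would read off from the normalized third order Dieudonn\'e Lemma in the form of Corollary \ref{cor:third}, under the correspondence $\mu=\zeta_\theta$ between the Schur parameter and the model variable. Writing $\Phi(w)=A(B+Cw)$ as the rotation--dilation $w\mapsto ACw$ followed by a translation, a boundary point $c(\zeta_\theta)+\rho(\zeta_\theta)e^{i\theta}$ of a model disk is sent to $A(B+Cc(\zeta_\theta))+|AC|\rho(\zeta_\theta)e^{i(\theta+\arg C)}$, the phase being $\arg C$ because $A>0$. A direct expansion, using $C\eta=r^2\ov{\lambda}(1-|\lambda|^2)$ and $|C|\,t=r^2(1-|\lambda|^2)$, shows that $A(B+Cc(\zeta_\theta))=c_0$ and $|AC|\rho(\zeta_\theta)=\rho_0$ when $\mu=\zeta_\theta$, so this image point is precisely $c_0+\rho_0 e^{i(\theta+\arg C)}$. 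Hence, when $\zeta_\theta\in\D$ the boundary value is attained exactly by the degree-$3$ extremal map of Corollary \ref{cor:third}(3) with $\mu=\zeta_\theta$ and rotation $e^{i(\theta+\arg C)}$, which is the first displayed extremal function; when $\zeta_\theta\in\partial\D$ one has $\rho(\zeta_\theta)=0$, hence $\rho_0=0$, and the map degenerates to the degree-$2$ extremal of Corollary \ref{cor:third}(2) with $\mu=\zeta_\theta$, giving the second. The crux is this clean matching of centers, radii and the $\arg C$ phase shift; once it is in place, uniqueness of the extremal functions is inherited directly from Theorem \ref{thm:lemthird}(3) and the envelope analysis of Proposition \ref{prop:boundary-V}.
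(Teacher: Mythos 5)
Your proposal is correct and follows essentially the same route the paper intends: the paper gives no separate proof, asserting that the theorem ``directly follows from Proposition \ref{prop:boundary-V}'' via the relation $V(r,s,\lambda)=A(B+CV)$, and your argument is exactly that derivation, with the affine transfer $\Phi(w)=A(B+Cw)$, the branch selection from $|\eta|=|\lambda|t<t$, and the matching $A(B+Cc(\zeta))=c_0$, $A|C|\rho(\zeta)=\rho_0$ under $\mu=\zeta_\theta$ all checking out. Your reading of case (ii) as $t-|\eta|\ge\frac12$ (rather than the $\le$ printed in the statement) is the correct one, consistent with Theorem \ref{thm:expression-of-boundary-curve}(ii).
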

The above theorem has a direct consequence corresponding to \cite[Theorem 1.1]{chen2020}, which shows three cases of $\p V(r,s,\lambda)$.
\begin{thm}
\label{thm:expression-of-boundary-curve}
Let $0 \leq s < r< 1$ and $|\lambda|<1$. Then
\begin{itemize}
 \item[{\rm (i)}]
If $t+|\eta|\leq \frac{1}{2} $,
then $\p V(r,s,\lambda)$ coincides with the Jordan curve given by
\begin{equation}\label{eq:i-p-V}
   \partial \mathbb{D} \ni \zeta
    \mapsto A \left(B+C c(\zeta )\right).
\end{equation}
 \item[{\rm (ii)}]
If $t-|\eta|\geq \frac{1}{2} $,
then $\partial V(r,s,\lambda)$ coincides with  the circle given by
\begin{equation}\label{eq:ii-p-V}
   \partial \mathbb{D} \ni \zeta
    \mapsto A
  \left\{B+\frac{\left[ 1+4(t^2-|\eta|^2) \right]t e^{i\th} -\ov{\eta}}{4(t^2-|\eta|^2)}C\right\}.
\end{equation}
 \item[{\rm (iii)}]
If $t+|\eta| > \frac{1}{2}$ and $t-|\eta| < \frac{1}{2} $,
then
$\p V(r,s,\lambda)$ consists of the circular arc given by
\begin{equation}\label{eq:iii-1-p-V}
   \Theta\ni \th
    \mapsto A
  \left\{B+\frac{\left[ 1+4(t^2-|\eta|^2) \right]t e^{i\th} -\ov{\eta}}{4(t^2-|\eta|^2)}C\right\},
\end{equation}
and the simple arc given by
\begin{equation}\label{eq:iii-2-p-V}
    J \ni \zeta
    \mapsto A \left(B+C c(\zeta )\right),
\end{equation}
where
$$\Theta=\left\{\theta\in (-\pi,\pi]: \cos \left(\theta+\arg(\eta)\right)> \frac{t^2+|\eta|^2-4(t^2-|\eta|^2)^2}{2t|\eta|} \right\},$$
and
$$J=\left\{\zth:\cos \left(\theta+\arg(\eta)\right)\leq \frac{t^2+|\eta|^2-4(t^2-|\eta|^2)^2}{2t|\eta|}\right\}$$
 is the closed subarc of $\partial \mathbb{D}$.
\end{itemize}
\end{thm}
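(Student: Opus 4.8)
The plan is to obtain all three cases as immediate specializations of the dichotomy in Proposition~\ref{prop:boundary-V} (equivalently, of Theorem~\ref{thm:boundary-curve}), by deciding for each geometry which of the two branches of \eqref{eq:expression_of_v_theta} governs a given direction $\th$ and then simplifying. The single computational input I would set up first is the identity
\[
   |te^{i\th}-\ov{\eta}|^2 = t^2+|\eta|^2-2t|\eta|\cos(\th+\arg\eta),
\]
whose extreme values over $\th$ are $(t-|\eta|)^2$ and $(t+|\eta|)^2$; recall $t>|\eta|$ because $|\lambda|<1$. Squaring and rearranging the branch condition $|te^{i\th}-\ov{\eta}|<2(t^2-|\eta|^2)$ turns it into $\cos(\th+\arg\eta)>\frac{t^2+|\eta|^2-4(t^2-|\eta|^2)^2}{2t|\eta|}$, which is exactly the defining inequality of $\Theta$, its complement giving $J$. (When $\lambda=0$ one has $\eta=0$ and only cases (i) and (ii) can occur, so this use of $\arg\eta$ is harmless.)

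For case (i), the hypothesis $t+|\eta|\le\frac12$ is equivalent, upon dividing $t-|\eta|\ge 2(t-|\eta|)(t+|\eta|)$ by $t-|\eta|>0$, to $\min_\th|te^{i\th}-\ov{\eta}|=t-|\eta|\ge 2(t^2-|\eta|^2)$; hence every $\th$ lands in the branch $v_\th=c(\zeta_\th)$. Since here $t_\th$ solves \eqref{eq:definig_equation_of_r_theta}, formula \eqref{eq:def_of_zeta_theta} forces $|\zeta_\th|=1$, and the continuous bijectivity of $\th\mapsto v_\th$ lets $\zeta_\th$ sweep $\partial\D$ once, yielding \eqref{eq:i-p-V} after applying $\zeta\mapsto A(B+C\zeta)$. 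Symmetrically, $t-|\eta|\ge\frac12$ is equivalent to $\max_\th|te^{i\th}-\ov{\eta}|=t+|\eta|\le 2(t^2-|\eta|^2)$, so in case (ii) every direction falls in the branch with $t_\th=t$ and $\zeta_\th=\frac{te^{i\th}-\ov{\eta}}{2(t^2-|\eta|^2)}\in\D$.

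The computational heart, and the step I expect to be the main obstacle, is simplifying $c(\zeta_\th)+\rho(\zeta_\th)e^{i\th}$ for this $\zeta_\th$. Substituting it into $c(\zeta)=\zeta-\eta\zeta^2$ and $\rho(\zeta)=t(1-|\zeta|^2)$ and clearing the common denominator $4(t^2-|\eta|^2)^2$, the terms proportional to $e^{2i\th}$ cancel and the factor $(t^2-|\eta|^2)$ collapses the remainder to
\[
   c(\zeta_\th)+\rho(\zeta_\th)e^{i\th}
   = \frac{[1+4(t^2-|\eta|^2)]te^{i\th}-\ov{\eta}}{4(t^2-|\eta|^2)},
\]
which is a genuine circle in $\th$; applying $\zeta\mapsto A(B+C\zeta)$ gives \eqref{eq:ii-p-V}.

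For case (iii) I would glue the two regimes: on $\Theta$ the direction sits in the $t_\th=t$ branch, so $\partial V(r,s,\lambda)$ follows the circular arc \eqref{eq:iii-1-p-V} coming from the displayed identity, while on $J$ it sits in the $|\zeta_\th|=1$ branch, giving the simple arc \eqref{eq:iii-2-p-V}. Finally I would confirm that the hypotheses $t+|\eta|>\frac12$ and $t-|\eta|<\frac12$ place the threshold $\frac{t^2+|\eta|^2-4(t^2-|\eta|^2)^2}{2t|\eta|}$ strictly inside $(-1,1)$ — these are precisely $(t-|\eta|)^2<4(t^2-|\eta|^2)^2$ and $(t+|\eta|)^2>4(t^2-|\eta|^2)^2$ after clearing denominators — so that $\Theta$ and $J$ are complementary nondegenerate arcs whose images meet continuously, assembling into the single Jordan curve $\partial V(r,s,\lambda)$ already guaranteed by Proposition~\ref{prop:boundary-V}.
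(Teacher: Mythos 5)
Your proposal is correct and follows exactly the route the paper intends: the paper states this theorem as a direct consequence of Proposition \ref{prop:boundary-V} and Theorem \ref{thm:boundary-curve} (omitting the computations, which are analogous to those in \cite{chen2020}), and you supply precisely those details — the translation of the branch condition $|te^{i\theta}-\ov{\eta}|\lessgtr 2(t^2-|\eta|^2)$ into the cosine inequality defining $\Theta$ and $J$, the extremal-value criteria $t\pm|\eta|\lessgtr\tfrac12$, and the simplification of $c(\zeta_\theta)+\rho(\zeta_\theta)e^{i\theta}$ to $\frac{[1+4(t^2-|\eta|^2)]te^{i\theta}-\ov{\eta}}{4(t^2-|\eta|^2)}$, which I have checked and which is correct. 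No gaps worth flagging.
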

\begin{rmk}
$J$ is the closed subarc of $\partial \mathbb{D}$ which has end points
$\zeta_{\theta_1} = \dfrac{te^{i \theta_1} -\ov{\eta} }{2(t^2-|\eta|^2)}$
and $\zeta_{\theta_2} = \dfrac{te^{i \theta_2} -\ov{\eta} }{2(t^2-|\eta|^2)}$,
where $-\pi<\th_1<\th_2\le \pi$ are the two solutions of
$$|te^{i\theta} - \ov{\eta}| = 2(t^2-|\eta|^2).$$
\end{rmk}

\subsection*{Acknowledgements}
The author would like to thank Professor Toshiyuki Sugawa for his helpful comments and valuable suggestions. The author is also grateful to Professor Ming Li for her helpful suggestions.
\bibliographystyle{amsplain} 

\end{document}